\documentclass[12pt,psamsfonts,reqno]{amsart}

\usepackage{amssymb,amsfonts}
\usepackage[all,arc]{xy}
\usepackage{enumerate}
\usepackage{mathrsfs}

\usepackage[top=1in, bottom=1.25in, left=1.25in, right=1.25in]{geometry}
\usepackage{amsfonts}
\usepackage{amsthm}
\usepackage{amsmath}
\usepackage{parskip}
\usepackage{mathrsfs}
\usepackage{graphicx}
\usepackage{amssymb}
\usepackage{xtab}
\usepackage{mathtools}

\usepackage{hyperref}
\usepackage{cleveref}
\usepackage{enumerate}
\usepackage{tikz} 
\usepackage{float}
\usetikzlibrary{positioning}
\usetikzlibrary{shapes.geometric,arrows}

\newtheorem{thm}{Theorem}[section]

\newtheorem{prop}[thm]{Proposition}
\newtheorem{lem}[thm]{Lemma}

\theoremstyle{definition}
\newtheorem{defn}[thm]{Definition}

\newtheorem{exmp}[thm]{Example}

\theoremstyle{remark}
\newtheorem{rem}[thm]{Remark}

\makeatletter

\makeatother
\numberwithin{equation}{section}

\newcommand{\q}{\mathbb{Q}}

\setlength{\parindent}{15pt}
\parskip=0pt

\DeclareMathOperator{\Norm}{Norm}

\newcommand*{\myproofname}{Proof of \Cref{thm: bPefectPowers}}
\newenvironment{myproof}[1][\myproofname]{\begin{proof}[#1]}{\end{proof}}

\title{On the Finiteness of Perfect Powers in Elliptic Divisibility Sequences}
\author[A. Alfaraj]{Abdulmuhsin Alfaraj}
\address{Abdulmuhsin Alfaraj\\
	Department of Mathematical Sciences \\
	University of Bath \\
	Claverton Down \\
	Bath \\
	BA2 7AY \\
	UK.}
\urladdr{}

\begin{document}

\begin{abstract}
	We prove that there are finitely many perfect powers in elliptic divisibility sequences generated by a non-integral point on elliptic curves of the from $y^2=x(x^2+b)$, where $b$ is any positive integer. We achieve this by using the modularity of elliptic curves over real quadratic number fields.
\end{abstract}

\maketitle
\tableofcontents

\setcounter{equation}{0}

\section{Introduction}
We start by explaining what we mean by an elliptic divisibility sequence.
\begin{defn}\label{def:EDS}
Let $E/\q$ be an elliptic curve given in Weierstrass form. Let $P\in E(\q)$ be a non-torsion point. For all $m\in \mathbb{N}$, we can write $$mP=\left(A_m/B_m^2, C_m/B_m^3 \right),$$ where $A_m, B_m, C_m$ are integers, $B_m\neq 0$, and $\gcd(A_m,B_m)=\gcd(C_m,B_m)=1$. We say that $\{B_m\}$ is the elliptic divisibility sequence generated by $P$.
\end{defn}

In this paper we prove the finiteness of perfect powers in elliptic divisibility sequences generated by a non-integral point in a new class of elliptic curves.
\begin{thm}\label{thm: bPefectPowers}
	Let $\{B_m\}$ be an elliptic divisibility sequence generated by a non-integral point on any  elliptic curve over $\q$ of the form $$E:\quad y^2=x(x^2+b),$$ where $b$ is a positive integer.
	Then there are finitely many perfect powers in $\{B_m\}$.
\end{thm}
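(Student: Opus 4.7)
The plan is to convert the hypothesis that $B_m$ is a nontrivial $q$-th power into an auxiliary ternary Diophantine equation of signature $(4,4q,2)$ and then attack that equation by the Frey--Hellegouarch/modularity method over a real quadratic field.

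\textbf{Preliminary reduction.} Standard divisibility and primitive-divisor arguments in the theory of elliptic divisibility sequences (using $B_{d}\mid B_{m}$ whenever $d\mid m$, together with the hypothesis $B_{1}>1$) reduce the statement to proving that for every sufficiently large prime $q$ there is no $m\ge 1$ with $B_{m}=y^{q}$ for some integer $y>1$. Writing $mP=(A/B^{2},C/B^{3})$ in lowest terms on $E\colon y^{2}=x(x^{2}+b)$ gives the integer identity $C^{2}=A(A^{2}+bB^{4})$. Using $\gcd(A,B)=1$ and the elementary observation that $\gcd(A,A^{2}+bB^{4})\mid b$, one extracts a positive divisor $g$ of $b$ and integers $s,t$ with $A=gs^{2}$ and $A^{2}+bB^{4}=gt^{2}$. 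Setting $d_{1}=g$, $d_{2}=b/g$ and substituting $B=y^{q}$ produces the ternary equation
\[
d_{1}\,s^{4}+d_{2}\,y^{4q}=t^{2}.
\]

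\textbf{Frey curve and modularity.} Let $K=\q(\sqrt{d_{1}})$, a real quadratic field since $d_{1}>0$ (when $d_{1}$ is a square, the argument runs instead over $\q$ via classical modularity). Factoring the equation as $(t-s^{2}\sqrt{d_{1}})(t+s^{2}\sqrt{d_{1}})=d_{2}\,y^{4q}$ in $\mathcal{O}_{K}$ and applying the Frey--Hellegouarch recipe to the additive triple $\bigl(t+s^{2}\sqrt{d_{1}},\,-(t-s^{2}\sqrt{d_{1}}),\,2s^{2}\sqrt{d_{1}}\bigr)$ produces an elliptic curve $F/K$ whose minimal discriminant contains the factor $y^{8q}$ but whose conductor over $K$ is supported only on primes above $2b$. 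The modularity theorem for elliptic curves over real quadratic fields (Freitas--Le Hung--Siksek) then attaches a Hilbert newform over $K$ to $F$; Ribet-style level lowering, once $\bar\rho_{F,q}$ has been shown irreducible for all sufficiently large $q$, pins this newform down to an explicit finite-dimensional space depending only on $b$.

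\textbf{Elimination and main obstacle.} To finish, one computes the space of Hilbert newforms at the predicted level and eliminates all candidates, typically by comparing traces of Frobenius at a few small auxiliary primes of $K$ or by showing that no form of the required rationality type exists; this leaves only finitely many small exponents $q$, which are disposed of using the quadratic growth $\log B_{m}\asymp \hat h(P)\,m^{2}$. The main technical obstacle I anticipate is uniformity in~$b$: the splitting behaviour of $2$ in $K$ depends on $b\bmod 8$, which simultaneously affects the conductor of $F$ at primes above $2$, the precise Hilbert modular form space one must eliminate, and the hypotheses under which irreducibility of $\bar\rho_{F,q}$ can be established. Reconciling all divisors $g$ of $b$ and all possible $2$-adic types of $K$ is where the bulk of the technical work will live.
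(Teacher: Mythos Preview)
Your Frey--Hellegouarch set-up over $K=\q(\sqrt{d_{1}})$, the invocation of Freitas--Le~Hung--Siksek modularity, and the level-lowering step all match the paper's argument essentially verbatim. The real divergence is in the \emph{elimination step}, and there your proposal has a genuine gap.

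You propose to ``compute the space of Hilbert newforms at the predicted level and eliminate all candidates'' by comparing traces at small auxiliary primes. But there is no reason this should succeed: the ternary equation $d_{1}s^{4}+d_{2}y^{4q}=t^{2}$ has obvious small solutions (e.g.\ with $y=1$), and the newform attached to such a solution will sit at the lowered level and be compatible with every trace test you run. The paper does \emph{not} attempt to empty the newform space. Instead it uses the non-integrality hypothesis $B_{1}>1$ in a second, more substantial way (your proposal only invokes it in the preliminary reduction). Since some prime $q_{0}$ divides $B_{1}$, the divisibility and primitive-divisor structure of the sequence forces, for all sufficiently large exponents~$\ell$, a \emph{specific} prime $p_{0}\nmid 2b$ to divide $B_{m}$ and hence $y$. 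This $p_{0}$ is independent of $m$ and $\ell$, and gives the Frey curve multiplicative reduction at any $\mathfrak{p}\mid p_{0}$ with $\mathfrak{p}\nmid \ell\mathcal{N}_{\ell}$. The level-lowering congruence then reads $a_{\mathfrak{p}}(\mathfrak{f})\equiv\pm(\Norm(\mathfrak{p})+1)\pmod{\lambda}$, and Ramanujan--Petersson makes the right-hand norm nonzero, bounding $\ell$ outright. No newforms are eliminated; the existence of $p_{0}$ does all the work. This is also why the paper is untroubled by your anticipated ``main obstacle'': the $2$-adic conductor of $F$ need only be \emph{bounded} (which is automatic), not computed, since the argument never inspects the newform space directly.

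A smaller issue: your disposal of small exponents ``using the quadratic growth $\log B_{m}\asymp \hat h(P)m^{2}$'' does not work as stated---growth alone does not prevent $B_{m}$ from being a $q$-th power infinitely often for fixed $q$. The paper handles this by quoting Everest--Reynolds--Stevens, which rests on Faltings' theorem.
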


We prove this using the so-called modular approach, which is inspired by the proof of Fermat's last theorem, along with utilizing the properties of elliptic divisibility sequences. The modular method was first used to study integer solutions to special classes of Diophantine equations, and it mainly relies on the modularity theorem and Ribet's level-lowering theorem. Recently, Freitas, Le Hung, and Siksek \cite{FLS} proved that elliptic curves over real quadratic number fields are modular. Moreover, Fujiwara  \cite{Fuji}, Jarvis \cite{Jarvis} and Rajaei \cite{Rajaei} proved level lowering theorems for Hilbert
eigenforms which can be considered as generalisations of Ribet's level
lowering theorem. This allowed us to apply this modular method when working with equations over real quadratic fields, which was a main part of our proof.

The study of the finiteness of perfect powers in sequences first started with the Fibonacci sequence and the Lucas sequence. By utilizing modular methods along with other techniques, Bugeaud, Mignotte and Siksek \cite{FibLuc} proved that the only perfect powers in the Fibonacci sequence and the Lucas sequence are $\{0, 1, 8,144\}$ and  $\{1,4\}$, respectively. This motivated the study of perfect powers in other special sequences, such as elliptic divisibility sequences. Everest and King \cite{EverestPP} showed that, when assuming certain conditions on the generating point, the corresponding elliptic divisibility sequence contains only a finite number of elements that are powers of primes.  In \cite{Everest4}, Everest, Reynolds, and Stevens showed that the number of perfect powers of a fixed exponent in an elliptic divisibility sequence is finite by invoking Faltings' theorem.  If the first term of the sequence is divisible by $2$ or $3$, or if the sequence arises from a Mordell curve where the first term is greater than $1$, Reynolds \cite{Jrey2} has shown that there are only finitely many terms that are perfect powers in both of these cases by using the modular approach. \\

\textbf{Outline of the paper.} We start by collecting some relevant results on elliptic curves in section \ref{sec:ec}. In section \ref{sec:EDS}, we state some properties of elliptic divisibility sequences. In section \ref{sec:Mod} we briefly cover results on the modularity of elliptic curves over quadratic number fields along with a level-lowering theorem. Finally, we prove \Cref{thm: bPefectPowers} in section \ref{sec:proof}. \\

\textbf{Acknowledgements.} I would like to thank Samir Siksek for sharing his knowledge and for his guidance throughout my master's project at the University of Warwick, from which this paper was produced. I would also like to thank Daniel Loughran for his guidance and comments.

\section{Elliptic Curves}\label{sec:ec}

In this section we collect some relevant results on elliptic curves over number fields. The primary aim is to provide sufficient conditions to check if a given equation of an elliptic curve has an equation that is minimal at a prime ideal, and to identify its type of reduction modulo a prime ideal.

Let $E$ be an elliptic curve over some field $K$. We say that $E/K$ is given in Weierstrass form if $$ E\>:\> y^2+a_1xy+a_3y=x^3+a_2x^2+a_4x+a_6, $$
where $a_1,a_2,a_3,a_4,a_6\in K$. We define the quantities $\Delta$ and $c_4$ as in III.1 in \cite{Silverman1}.

Let $K$ be a number field and $\mathcal{O}_K$ be its ring of integers. Let $v_{\mathfrak{p}}$ be the discrete valuation of $K$ with respect to the prime ideal $\mathfrak{p}$. We write $K_{\mathfrak{p}}$ for the completion of $K$ at $v_{\mathfrak{p}}$, $\mathcal{O}_{\mathfrak{p}}$ for its ring of integers, ${\mathfrak{p}}$ for the maximal ideal of $\mathcal{O}_{\mathfrak{p}}$, and $k_{\mathfrak{p}}=\mathcal{O}_{\mathfrak{p}}/{\mathfrak{p}}$, the residue field of $\mathcal{O}_{\mathfrak{p}}$. We say that a Weierstrass equation for $E/K$ is minimal at $\mathfrak{p}$ if $v_{\mathfrak{p}}(\Delta)$ is minimal subject to the condition that $a_1,a_2,a_3,a_4,a_6\in \mathcal{O}_{{\mathfrak{p}}}$.

\begin{rem}\label{rem:minimalcriteria}
	Suppose we are given a Weierstrass equation of $E/K$ with coefficients in $\mathcal{O}_{\mathfrak{p}}$. If $v_{\mathfrak{p}}(\Delta)<12$ or $v_{\mathfrak{p}}(c_4)<4$, then the equation is minimal at $\mathfrak{p}$ (Remark VII.1.1 in \cite{Silverman1}).
\end{rem}

\begin{prop}[VII.5.1 \cite{Silverman1}] \label{prop:reduction-criteria}
	Let $E/K$ be an elliptic curve given by a minimal Weierstrass equation for $E$ at some $\mathfrak{p}$. Then
	\begin{enumerate}
		\item $E$ has good reduction at $\mathfrak{p}$ if and only if $v_{ \mathfrak{p}}(\Delta)=0$;
		\item  $E$ has multiplicative reduction at $\mathfrak{p}$ if and only if $v_{ \mathfrak{p}}(\Delta)>0$ and $v_{ \mathfrak{p}}(c_4)=0$;
		\item and $E$ has additive reduction at $\mathfrak{p}$ if and only if $v_{ \mathfrak{p}}(\Delta)>0$ and $v_{ \mathfrak{p}}(c_4)>0$.
	\end{enumerate}
\end{prop}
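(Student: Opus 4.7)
The plan is to reduce the given minimal Weierstrass equation modulo $\mathfrak{p}$ to obtain a cubic $\widetilde{E}/k_{\mathfrak{p}}$, and then classify the reduction type by direct analysis of this (possibly singular) cubic. The key input is that $\Delta$ and $c_4$ are universal polynomials in $a_1,\ldots,a_6$ with integer coefficients, so their reductions modulo $\mathfrak{p}$ are exactly the corresponding invariants of $\widetilde{E}$ over $k_{\mathfrak{p}}$. Minimality is what guarantees that this naive reduction is meaningful: without it one could clear a uniformizer out of the variables and artificially increase $v_{\mathfrak{p}}(\Delta)$.

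For part (1), I would invoke the standard fact that a Weierstrass cubic over a field is nonsingular if and only if its discriminant is nonzero. Hence $\widetilde{E}$ is smooth, meaning $E$ has good reduction at $\mathfrak{p}$, precisely when $\widetilde{\Delta} \neq 0$, i.e. $v_{\mathfrak{p}}(\Delta) = 0$. This simultaneously forces $v_{\mathfrak{p}}(\Delta) > 0$ in both remaining cases, handling half of (2) and (3) at once.

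For (2) and (3), assume $v_{\mathfrak{p}}(\Delta) > 0$, so that $\widetilde{E}$ is singular. I would first check that a singular Weierstrass cubic has a unique singular point, and that (after an affine translation defined over $k_{\mathfrak{p}}$) this point may be placed at the origin; the affine equation then has no constant or linear terms, so it takes the form $Q(x,y) = x^{3}$, where $Q$ is a binary quadratic form in $x,y$. The singularity is a node (multiplicative reduction) exactly when $Q$ factors into two distinct linear forms over $\overline{k}_{\mathfrak{p}}$, and a cusp (additive reduction) otherwise. The final step is a direct computation showing that the discriminant of $Q$ equals $\widetilde{c_{4}}$ up to a unit in $k_{\mathfrak{p}}$; hence two distinct tangent directions correspond exactly to $v_{\mathfrak{p}}(c_{4}) = 0$, which yields both (2) and (3).

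The main technical obstacle is treating residue characteristics $2$ and $3$ uniformly: in those characteristics the usual tricks of completing the square in $y$ and depressing the cubic in $x$ are not available, so the identification of the tangent-cone discriminant with $\widetilde{c_{4}}$ must be done using the characteristic-free quantities $b_{2}, b_{4}, b_{6}, c_{4}$ from Silverman III.1 rather than a short Weierstrass form. Bookkeeping how these universal expressions transform under the affine change of coordinates that moves the singular point to the origin is the most delicate part of the argument.
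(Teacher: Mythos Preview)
The paper does not give its own proof of this proposition: it is quoted verbatim from Silverman with the citation VII.5.1 and used as a black box, so there is nothing in the paper to compare your argument against.

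That said, your sketch is the standard argument (essentially Silverman III.1.4 combined with VII.5.1) and is correct in outline. One small imprecision: after translating the singular point to the origin the equation reads $y^{2}+a_{1}xy-a_{2}x^{2}=x^{3}$ (so $a_{3}=a_{4}=a_{6}=0$), and the discriminant of the tangent quadratic $Q$ is $b_{2}=a_{1}^{2}+4a_{2}$, not $\widetilde{c_{4}}$ ``up to a unit''. In this situation $b_{4}=0$, whence $c_{4}=b_{2}^{2}$; thus $\widetilde{c_{4}}=0$ if and only if the tangent cone degenerates, and your conclusion stands, but the identification is $c_{4}=(\operatorname{disc}Q)^{2}$ rather than $\operatorname{disc}Q$ times a unit. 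Your instinct to work with the characteristic-free quantities $b_{2},b_{4},b_{6}$ to handle residue characteristics $2$ and $3$ is exactly right.
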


We say that $E$ has semistable reduction at $\mathfrak{p}$ if it has either good or multiplicative reduction at $\mathfrak{p}$. We warn the reader that this terminology is used exclusively for multiplicative reduction in \cite{Silverman1}.

\begin{defn}
	We define the conductor of $E/K$ to be 
	$$\mathcal{N}=\prod_{\mathfrak{p} \text{ prime ideal} } \mathfrak{p}^{f_{\mathfrak{p}}(E)},$$
	where for all primes $\mathfrak{p}$ not dividing $2$ or $3$,
	\[	f_\mathfrak{p}(E) =
	\begin{cases}
		0 & \text{if $E$ has good reduction at $\mathfrak{p}$}, \\
		1 & \text{if $E$ has multiplicative reduction at $\mathfrak{p}$}, \\
		2 & \text{if $E$ has additive reduction at $\mathfrak{p}$};
	\end{cases}
	\]
	and for $\mathfrak{p}$ dividing $2$ or $3$,  $f_\mathfrak{p}(E)$ is defined similarly, except for when $E$ has additive reduction; in which case, $f_\mathfrak{p}(E)$ may have values greater than $2$. 
\end{defn}

\begin{rem}\label{rem:boundNpowers23}
	By Theorem IV.10.4 in \cite{SilvermanAdv}, for all primes $\mathfrak{p}$,  $f_\mathfrak{p}(E)$ has a bound depending only on the number field $K$. 
\end{rem}

\vspace{5 pt}

\section{Elliptic Divisibility Sequences}\label{sec:EDS}

In this section we state some properties of elliptic divisibility sequences that we will be using in the proof of \Cref{thm: bPefectPowers}.

First, we note that our definition of an elliptic divisibility sequence is the one used by Silverman in \cite{IngramSilverman}, and by Everest in \cite{EverestPP}. The term elliptic divisibility sequence was also used by Ward in \cite{Ward} to define a closely related class of sequences satisfying certain recurrence relations. In this paper, we work with \Cref{def:EDS} of an elliptic divisibility sequence. 

We say that an element of a sequence of integers has a primitive divisor if it is divisible by a prime $p$ that does not divide any element preceding it. 

\begin{exmp}
	Consider the elliptic curve $$E: \qquad y^2=x(x^2+5)$$ over $\q$.
	The Mordell-Weil group of $E(\q)$ has rank $1$, with generator $P=(20,90)$, and torsion subgroup equal to the idenitity and the $2$-torsion point $(0,0)$. Consider the elliptic divisibility sequence $\{B_m\}$ generated by $mP$. We compute
	\begin{gather*} 2P=\left( \frac{6241}{36^2}, \frac{543599}{36^3} \right), \quad 3P=\left( \frac{700217780}{19679^2}, \frac{29468421431730}{19679^3} \right), \text{ and}\\
		\quad 4P=\left( \frac{933424765104001}{39139128^2}, \frac{108467911710220197291841}{39139128^3} \right).
	\end{gather*}   
	Thus, the first $4$ elements of the sequence are $B_1=1$, $B_2=36$, $B_3=19679$ , and $B_4= 39139128$. Note that the primitive divisors of $B_2$ are $\{2,3\}$, of $B_3$ are $\{11,1789\}$, and of $B_4$ are $\{7,79,983\}$.
\end{exmp}

\begin{thm}[Silverman \cite{SilvermanPrimitive}]\label{thm:silverman}
	Let $\{B_m\}$ be an elliptic divisibility sequence. Then, the number of elements in $\{B_m\}$ not having a primitive divisor is finite.
\end{thm}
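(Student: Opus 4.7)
The plan is to exploit a tension between two different estimates on $\log B_m$: a lower bound coming from canonical heights, and an upper bound obtained by tracing every prime factor of $B_m$ back to an earlier term of the sequence under the assumption that no primitive divisor exists. For $m$ large, the upper bound will undercut the lower bound, forcing a contradiction.

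\textbf{Height asymptotic.} I would first establish that $\log B_m = \hat h(P)\,m^2 + o(m^2)$ as $m \to \infty$. Writing $mP = (A_m/B_m^2,\, C_m/B_m^3)$, the Weil height satisfies $h(x(mP)) = \log\max(|A_m|, B_m^2)$ and differs from $2\hat h(mP) = 2m^2\hat h(P)$ by a quantity bounded in terms of $E$. Non-torsion of $P$ gives $\hat h(P) > 0$, and Siegel's theorem on integral points (together with an archimedean comparison of $\log|A_m|$ with $2\log B_m$) upgrades this to the sharp asymptotic. In particular, both $\log B_m / m^2$ and the analogous quantities for divisors $d \mid m$ converge to $\hat h(P)$.

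\textbf{Valuation bookkeeping.} For each prime $p$ dividing some term of $\{B_m\}$, let the rank of apparition be $n_p = \min\{k : p \mid B_k\}$. The strong divisibility property of EDS gives $p \mid B_m$ if and only if $n_p \mid m$. For primes $p$ of good reduction, formal-group theory applied to the kernel of reduction shows that for $n_p \mid m$,
\[
v_p(B_m) = v_p(B_{n_p}) + v_p(m/n_p),
\]
and at the finitely many bad primes (and $p=2,3$) the same identity holds up to a uniformly bounded error.

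\textbf{Combinatorial contradiction.} Suppose $B_m$ has no primitive divisor, so every $p \mid B_m$ has $n_p \mid m$ with $n_p < m$. Grouping primes by their rank of apparition $n_p = d$ and using $\sum_{p: n_p = d} v_p(B_d)\log p \leq \log B_d$ and $\sum_{p: n_p = d} v_p(m/d)\log p \leq \log(m/d)$, I obtain
\[
\log B_m \;\leq\; \sum_{\substack{d \mid m \\ d < m}}\bigl(\log B_d + \log(m/d)\bigr) + O(1).
\]
Inserting $\log B_d \leq (\hat h(P)+o(1))\,d^2$ and using
\[
\sum_{\substack{d \mid m \\ d < m}} d^{2} \;=\; m^{2}\!\!\sum_{\substack{e \mid m \\ e > 1}}\! e^{-2} \;\leq\; m^{2}(\zeta(2)-1),
\]
this yields $\log B_m \leq \hat h(P)(\zeta(2)-1)m^2 + o(m^2)$. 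Since $\zeta(2) - 1 < 1$, this contradicts the height asymptotic for $m$ sufficiently large, so only finitely many $B_m$ can fail to have a primitive divisor.

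\textbf{Main obstacle.} The principal technical point is the valuation identity $v_p(B_m) = v_p(B_{n_p}) + v_p(m/n_p)$ at primes of bad reduction and at $p = 2,3$, where the naive formal-group argument breaks down. This is absorbed by noting that the set of such primes is finite and their total contribution to $\log B_m$ is $O(m)$, which is dwarfed by the $m^2$ scale. A secondary subtlety is that I genuinely need the sharp \emph{lower} bound $\log B_m \gtrsim \hat h(P) m^2$, not merely an upper bound; this is where Siegel's theorem (or its effective refinements) must be invoked. With these ingredients the combinatorial punchline is essentially $\zeta(2) - 1 < 1$.
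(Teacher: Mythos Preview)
The paper does not supply a proof of this theorem; it merely attributes the result to Silverman \cite{SilvermanPrimitive} and invokes it as a black box. So there is no ``paper's own proof'' to compare against. What you have sketched is, in fact, essentially Silverman's original argument from that reference: the height asymptotic $\log B_m \sim \hat h(P)\,m^2$ (whose sharp lower bound genuinely requires Siegel's theorem), the rank-of-apparition bookkeeping, and the combinatorial punchline via $\sum_{d\mid m,\,d<m} d^2 \le (\zeta(2)-1)m^2$. Your outline is correct and faithful to the source.

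Two minor technical remarks. First, the error term you write as $O(1)$ in the combinatorial inequality is really $O(d(m)\log m)$ once you account for the bounded correction at each of the finitely many bad primes across all divisors $d$; this is still $o(m^2)$, so the conclusion is unaffected, and you do flag this in your ``main obstacle'' paragraph. Second, your claim that the total contribution of bad primes to $\log B_m$ is $O(m)$ is generous; in fact it is $O(\log m)$ since there are finitely many such primes and each has $v_p(B_m) \ll \log m$. Neither point threatens the argument.
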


Elliptic divisibility sequences satisfy certain divisibility properties as the name suggests. The following proposition shows that they are strong divisibility sequences and that we have some form of control on the valuations of the terms.
\begin{prop}[Lemma IV.3.1 \& Corollary IV.4.5  \cite{String}]\label{prop:bmp1}
	Let $\{B_m\}$ be an elliptic divisibility sequence generated by a non-torsion point on some elliptic curve over $\q$ given by an integral Weierstrass equation. 
	\begin{enumerate}[(i)]
		\item Let $p$ be an odd prime, and $n,\> m\in \mathbb{N}$. If $v_p(B_n)>0$, then \begin{equation*}
			v_p(B_{nm})=v_p(B_n)+v_p(m).
		\end{equation*}
		Suppose that $p=2$ and $2\mid a_1$, the coefficient in the Weiestrass equation. If $v_2(B_n)>0$, then \begin{equation*}
			v_2(B_{nm})=v_2(B_n)+v_2(m).
		\end{equation*}
		\item $\{B_m\}$ is a strong divisibility sequence, i.e., for all $n,\> m\in \mathbb{N}$, $$\gcd(B_m,B_n)=B_{\gcd(m,n)}.$$
	\end{enumerate}
\end{prop}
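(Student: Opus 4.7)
The plan is to exploit the formal group $\hat{E}$ of $E$ with standard parameter $z = -x/y$. First I would record the dictionary between the sequence $\{B_m\}$ and the formal group: substituting $mP = (A_m/B_m^2,\, C_m/B_m^3)$ into $z = -x/y$ gives $z(mP) = -A_m B_m/C_m$, and the coprimalities $\gcd(A_m,B_m) = \gcd(C_m,B_m) = 1$ force $v_p(z(mP)) = v_p(B_m)$ for every prime $p$. In particular, $p \mid B_m$ if and only if $mP$ lies in the kernel of reduction $E_1(\q_p)$, which via $z$ is identified with $\hat{E}(p\z_p)$ (or $\hat{E}(4\z_2)$ for $p=2$, which is where the hypothesis $2 \mid a_1$ will play a role).

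For part (i), suppose $v_p(B_n) > 0$, so that $nP \in E_1(\q_p)$. I would then invoke the formal logarithm $\log_{\hat{E}}$: on the relevant subgroup (namely $\hat{E}(p\z_p)$ for $p$ odd, and $\hat{E}(2\z_2)$ for $p = 2$ when $2 \mid a_1$), it is an isomorphism onto the additive group $p\z_p$ (resp.\ $2\z_2$) intertwining $[m]$ with multiplication by $m$, and satisfies $v_p(\log z) = v_p(z)$ since the leading term dominates. Applying this isomorphism to the identity $z(nmP) = [m]\bigl(z(nP)\bigr)$ yields
$$v_p(B_{nm}) \;=\; v_p\bigl(z(nmP)\bigr) \;=\; v_p\bigl(\log z(nmP)\bigr) \;=\; v_p(m) + v_p\bigl(\log z(nP)\bigr) \;=\; v_p(m) + v_p(B_n),$$
which is the claim.

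Part (ii) I would deduce from (i) by working prime-by-prime. Fix $p$; if $p$ divides no term of the sequence, both sides of $\gcd(B_m,B_n) = B_{\gcd(m,n)}$ have $v_p$-valuation zero. Otherwise let $r$ be the \emph{rank of apparition} of $p$, the least $r \geq 1$ with $p \mid B_r$. Using that $E_1(\q_p)$ is a subgroup together with the minimality of $r$, a division-with-remainder argument on $k = qr + s$ shows $p \mid B_k$ iff $r \mid k$, and (i) then gives $v_p(B_k) = v_p(B_r) + v_p(k/r)$ whenever $r \mid k$. Substituting these into $\min(v_p(B_m), v_p(B_n))$ and using $\gcd(m,n)/r = \gcd(m/r,\, n/r)$ when $r \mid \gcd(m,n)$ gives $v_p(\gcd(B_m,B_n)) = v_p(B_{\gcd(m,n)})$, completing the proof after varying $p$.

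The main subtlety I anticipate is the $p=2$ case of (i): the formal logarithm on $\hat{E}$ only converges on $\hat{E}(2^{\lceil 1/(p-1)\rceil}\z_p)$ in general, so for $p=2$ one normally needs $v_2(z) \geq 2$ rather than just $v_2(z) \geq 1$. The hypothesis $2 \mid a_1$ is exactly what alters the shape of the formal group power series enough to extend the valuation identity down to $\hat{E}(2\z_2)$; verifying this requires a careful look at the explicit power series for $[m]$ and is the step where the clean uniform odd-prime argument breaks.
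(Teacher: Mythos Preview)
The paper does not prove this proposition; it simply records the statement with a citation to Streng's thesis (Lemma~IV.3.1 and Corollary~IV.4.5 there). Your formal-group approach is the standard route and is almost certainly what the cited source does, so there is no independent argument in the paper to compare against.

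Two points in your write-up deserve tightening. First, the dictionary $v_p\bigl(z(mP)\bigr) = v_p(B_m)$ is not valid for \emph{every} prime $p$: when $v_p(B_m)=0$ one has $v_p\bigl(z(mP)\bigr) = v_p(A_m) - v_p(C_m)$, which need not vanish (for instance when $mP$ reduces to a point with $y$-coordinate $\equiv 0$). Since you only invoke the identity under the running hypothesis $v_p(B_n)>0$, and then $v_p(B_{nm})>0$ follows from divisibility, the argument itself is unaffected; just qualify the sentence. Second, your derivation of (ii) from (i) applies (i) at $p=2$, which carries the extra hypothesis $2\mid a_1$, whereas (ii) as stated does not. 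In the paper's application $a_1=0$, so the point is moot there, but if you intend (ii) in full generality you must either treat $p=2$ separately or note that the cited Corollary~IV.4.5 handles it on its own.
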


Note that for the elliptic curve we are considering in \Cref{thm: bPefectPowers}, we have that $a_1=0$. Thus, the condition $2\mid a_1$ is satisfied. 

As a consequence of \Cref{thm:silverman} and \Cref{prop:bmp1}, we have the following lemma which will be a crucial step in proving \Cref{thm: bPefectPowers}. It says that whenever the first element of the sequence has a prime divisor, and given a finite set of primes, any element of the sequence that is an $\ell$th power for a prime $\ell$ large enough will have a prime divisor outside this finite set.

\begin{lem}\label{lem:primdivisorconstruct}
	Let $\{B_m\}$ be an elliptic divisibility sequence generated by a non-torsion point on some elliptic curve over $\q$ given by an integral Weierstrass equation, where $2\mid a_1$. Suppose that $B_1$ is divisible by some prime number $q$. Let $T$ be a finite set of prime numbers. There exists a positive integer $k$ and a prime $p\notin T$ such that if $B_m$ is an $\ell$th power for some prime $\ell>k$, then $p\>|\>B_m$.
\end{lem}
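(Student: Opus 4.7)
The plan is to combine Silverman's primitive divisor theorem (Theorem~\ref{thm:silverman}) with the $q$-adic formula of Proposition~\ref{prop:bmp1}(i), choosing $n$ to be a sufficiently high power of $q$ and letting $p$ be a primitive divisor of $B_n$.

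First I would exploit the hypothesis $q \mid B_1$. Applying Proposition~\ref{prop:bmp1}(i) with $B_1$ as the starting term (the case $q = 2$ is covered since $2 \mid a_1$) gives $v_q(B_m) = v_q(B_1) + v_q(m)$ for every $m \geq 1$. Hence if $B_m$ is an $\ell$-th power for some prime $\ell > v_q(B_1)$, then $\ell \mid v_q(B_1) + v_q(m)$ forces $v_q(m) \geq \ell - v_q(B_1)$, and so arbitrarily large powers of $q$ must divide $m$ as $\ell$ grows.

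Next I would select the candidate index $n$ and prime $p$. For each $p' \in T$ let $r(p')$ denote its rank of apparition in $\{B_m\}$ (the least $m$ with $p' \mid B_m$, or $\infty$ if no such $m$ exists), and set $M = \max\{r(p') : p' \in T,\ r(p') < \infty\}$. By Theorem~\ref{thm:silverman} there is some $N_0$ such that every $B_n$ with $n > N_0$ admits a primitive divisor. Fix $j$ with $q^j > \max(M, N_0)$, let $n = q^j$, and take $p$ to be any primitive divisor of $B_n$. Since a prime can be a primitive divisor of at most one term of the sequence and $n > M$, no element of $T$ can equal $p$, so $p \notin T$.

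To finish I would set $k = j + v_q(B_1)$ and check the conclusion directly: if $B_m$ is an $\ell$-th power with $\ell > k$ prime, the $q$-adic computation above yields $v_q(m) \geq \ell - v_q(B_1) > j$, so $n = q^j \mid m$. Strong divisibility (Proposition~\ref{prop:bmp1}(ii)) then gives $B_n \mid B_m$, and in particular $p \mid B_m$. I do not expect any serious obstacle; the only small subtlety is the choice of $n$ as a power of $q$ rather than an arbitrary large Silverman index, so that the divisibility of $m$ forced by the $\ell$-th power hypothesis automatically gives $n \mid m$ and matches the index where our primitive divisor lives.
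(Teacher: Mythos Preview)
Your argument is correct and follows essentially the same route as the paper's proof: both pick the auxiliary index to be a suitable power $q^{j}$ of the fixed prime divisor of $B_1$, invoke Silverman's theorem to obtain a primitive divisor $p\notin T$ of $B_{q^{j}}$, and then use the valuation formula of Proposition~\ref{prop:bmp1}(i) together with strong divisibility to force $q^{j}\mid m$ and hence $p\mid B_m$. Your treatment is a little more explicit (introducing ranks of apparition and the Silverman threshold $N_0$), but the underlying idea and structure are the same.
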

\begin{proof}
	By \Cref{thm:silverman}, all but finitely many elements of $\{ B_m\}$ have primitive divisors. Since $T$ is a finite set of primes, we can always choose a large enough positive integer $k$ such that the element $B_{q^{k-v_q(B_1)}}$ has a primitive divisor $p$ not contained in $T$. 
	
	Let $B_m$ is an $\ell$th power for some prime $\ell>k$. Since $q\>|\>B_1$, \Cref{prop:bmp1} implies that $$\ell\leq v_q(B_m)=v_q(B_1)+v_q(m) \> \implies \> v_q(m)\geq k-v_q(B_1),$$ that is, $ q^{k-v_q(B_1)}\>|\> m$.
	Thus, by \Cref{prop:bmp1}, we have
	$$p\>|\>B_{q^{k-v_q(B_1)}}=B_{\gcd(q^{k-v_q(B_1)},m)}=\gcd(B_{q^{k-v_q(B_1)}},B_m).$$
	Therefore, $p\>|\>B_m$.
\end{proof}
\vspace{5 pt}

\section{Modularity and Level-Lowering}\label{sec:Mod}

Let $K$ be a totally real number field, and denote the absolute Galois group of $K$ by $G_K = Gal(\bar{\q}/K)$. Let $ E $ be an elliptic curve over $ K $, and write $\rho_{ E,p}$ for the Galois representation arising from the action of $G_K$ on the $p$-adic Tate module $T_p(E)$. We say that $E$ is modular if there exists a Hilbert cuspidal eigenform $\mathfrak{f}$ of parallel weight $2$ with rational Hecke eigenvalues such that the $L$-function of $\mathfrak{f}$ is equal to the Hasse-Weil $L$-function of $E$. Another way to express this is that there exists an isomorphism of compatible systems of Galois representations  $ \rho_{ E,p} \cong \rho_{ \mathfrak{f},p} $
where $\rho_{ \mathfrak{f},p}$ is the Galois representation associated to $\mathfrak{f}$ by Eichler and Shimura \cite{DiamondShurman} for $K=\q$, and by Carayol \cite{Cay1} \cite{Cay2}, Blasius and Rogawski \cite{BlasRog}, Wiles \cite{WilesReps} and Taylor \cite{TayGalRepsH} for any totally real number field $K$.

\begin{thm}[Freitas, Le Hung, \& Siksek \cite{FLS}]\label{thm:RQmodulra}
	Let $E$ be an elliptic curve over a real quadratic number field. Then $E$ is modular.
\end{thm}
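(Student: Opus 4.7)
The plan is to adapt to real quadratic fields the Wiles--Taylor--Wiles strategy that established modularity over $\mathbb{Q}$, together with the ``3-5-7 switching trick'' of Breuil--Conrad--Diamond--Taylor. The scheme rests on three ingredients: (i) modularity lifting theorems for two-dimensional $p$-adic Galois representations over totally real fields; (ii) the Langlands--Tunnell theorem, which supplies an initial modularity input at the prime $3$ via the solvability of $\mathrm{GL}_2(\mathbb{F}_3)$; and (iii) a detailed analysis of certain modular curves of small level over real quadratic fields.

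First I would invoke the modularity lifting theorems available over totally real fields, due to Kisin, Gee, Skinner--Wiles, Breuil--Conrad--Diamond--Taylor, and others: if for some $p\in\{3,5,7\}$ the residual representation $\bar{\rho}_{E,p}$ is modular and satisfies suitable hypotheses (absolute irreducibility on restriction to $G_{K(\zeta_p)}$, appropriate local behaviour at primes above $p$, etc.), then $E$ itself is modular. The problem therefore reduces to establishing residual modularity for some $p\in\{3,5,7\}$. For $p=3$ this is automatic whenever $\bar{\rho}_{E,3}$ has sufficiently large image, since $\mathrm{GL}_2(\mathbb{F}_3)$ is solvable and Langlands--Tunnell supplies the required cuspidal automorphic form. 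If instead $\bar{\rho}_{E,3}$ has small image, one performs the classical $3$-$5$ or $3$-$7$ switch: produce an auxiliary curve $E'/K$ with $\bar{\rho}_{E',p}\cong\bar{\rho}_{E,p}$ for $p=5$ or $7$ but with $\bar{\rho}_{E',3}$ of large image, deduce modularity of $E'$ by the mod-$3$ argument, and transfer modularity back to $E$ by a second application of a lifting theorem.

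The main obstacle, and the technical heart of Freitas--Le Hung--Siksek, is the genuinely exceptional case in which $\bar{\rho}_{E,3}$, $\bar{\rho}_{E,5}$, and $\bar{\rho}_{E,7}$ \emph{simultaneously} fail to satisfy the hypotheses of the available lifting theorems. Such curves correspond to non-cuspidal $K$-points on the modular curves that parametrize pairs of bad residual representations, namely $X(b_3,b_5)$, $X(b_3,b_7)$, and $X(b_5,b_7)$, of level dividing $105$. The hard step is to classify the real quadratic points on each of these finitely many curves, \emph{uniformly} over all real quadratic $K$. This requires explicit computation of their Jacobians, Mordell--Weil sieves over quadratic fields, and Chabauty-type arguments, and one must check that every exceptional point either yields a curve with complex multiplication (hence modular for classical reasons) or lies in a previously handled modular family. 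Achieving this simultaneous finiteness over the entire collection of real quadratic fields is where the bulk of the new work, beyond the $\mathbb{Q}$-case, resides.
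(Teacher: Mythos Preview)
The paper does not prove this theorem at all; it is quoted as a black-box input from the cited work of Freitas, Le~Hung, and Siksek, and is invoked only to verify hypothesis~(ii) of the level-lowering theorem. There is therefore no proof in the paper to compare your proposal against.

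That said, your sketch is a faithful high-level summary of the actual strategy in \cite{FLS}: modularity lifting over totally real fields (Kisin, Gee, Breuil--Diamond, Thorne), Langlands--Tunnell input at $p=3$, the prime-switching tricks, and the reduction of the remaining non-modular locus to an explicit finite list of modular curves whose real quadratic points must be determined. One small correction: in \cite{FLS} the exceptional curves are not the three $X(b_3,b_5)$, $X(b_3,b_7)$, $X(b_5,b_7)$ you name. The lifting theorems available (notably Thorne's) already handle the prime $3$ well enough that the residual obstructions live at $5$ and $7$ simultaneously; the relevant curves are the normalisers-of-Cartan and Borel variants $X(\mathrm{b}5,\mathrm{b}7)$, $X(\mathrm{b}5,\mathrm{ns}7)$, $X(\mathrm{ns}5,\mathrm{b}7)$, $X(\mathrm{ns}5,\mathrm{ns}7)$ (and a few further split-Cartan variants), of genus at least $2$. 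The determination of their quadratic points is indeed the technical core, carried out via a mix of Mordell--Weil sieving, symmetric Chabauty, and known results on hyperelliptic quotients, exactly as you indicate.
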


Denote by $$\bar{\rho}_{E,p}:G_K\rightarrow \text{Aut}(E[p])\cong \text{GL}_2(\mathbb{F}_p)$$ the representation giving the action of $G_K$ on $E[p]$, the $p$-torsion of $E$. Let $\mathfrak{f}$ be Hilbert eigenform $\mathfrak{f}$ over $K$. We denote by $\q_\mathfrak{f}$ the number field generated by the eigenvalues $a_{\mathfrak{q}}(\mathfrak{f})$  at all prime ideals $\mathfrak{q}$ of $\mathcal{O}_K$.

The following theorem, stated in \cite{FS}, is a useful summary of level lowering results
of Fujiwara  \cite{Fuji}, Jarvis \cite{Jarvis} and Rajaei \cite{Rajaei} in the context of modular elliptic
curves.

\begin{thm}\label{thm:Level-Low}
	Let $K$ be a totally real number field, and $E/K$ an elliptic
	curve of conductor $\mathcal{N}$. Let $\ell$ be a rational prime. For a prime ideal $\mathfrak{q}$ of $K$ denote
	by $\Delta_{\mathfrak{q}}$ the discriminant of a local minimal model for E at $\mathfrak{q}$. Let  $$\mathcal{M}_\ell:=\prod_{\mathfrak{p}||\mathcal{N}, \> \ell \mid v_{\mathfrak{p}} (\Delta_{\mathfrak{p}})} \mathfrak{p}, \>\>\>\> \mathcal{N}_\ell:=\frac{\mathcal{N}}{\mathcal{M}_\ell}.$$ 
	
	Suppose the following
	\begin{enumerate}[(i)]
		\item $\ell\geq 5$, the ramification index $e(\mathfrak{q}/\ell)<\ell-1$ for all $\mathfrak{q}\mid \ell$, and $\q(\zeta_l)^+\not\subset K$;
		\item $E$ is modular;
		\item $\bar{\rho}_{E,\ell}$ is irreducible;
		\item $E$ is semistable at all $\mathfrak{q}\mid \ell$; 
		\item $\ell \mid v_{\mathfrak{q}} (\Delta_{\mathfrak{q}})$ for all $\mathfrak{q}\mid \ell$.
	\end{enumerate}
	
	Then, there is a Hilbert eigenform $\mathfrak{f}$ of parallel weight $2$ that is new at level $\mathcal{N}_\ell$ and some prime ideal $\lambda$ of the ring of integers of $\q_\mathfrak{f}$ such that $\lambda\mid \ell$ and $\bar{\rho}_{E,\ell}\sim \bar{\rho}_{\mathfrak{f},\lambda}$.
\end{thm}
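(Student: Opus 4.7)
The plan is to derive the theorem by combining the modularity of $E$ (hypothesis (ii)) with a Tate-curve analysis of $\bar{\rho}_{E,\ell}$ at the primes in $\mathcal{M}_\ell$, followed by repeated application of a Mazur-principle-type level-lowering for Hilbert modular forms over totally real fields. Modularity produces a Hilbert eigenform $\mathfrak{g}$ of parallel weight $2$ and level $\mathcal{N}$ with $\bar{\rho}_{E,\ell} \sim \bar{\rho}_{\mathfrak{g},\lambda}$ for some prime $\lambda \mid \ell$ in the ring of integers of $\q_\mathfrak{g}$; the objective is then to replace $\mathfrak{g}$ by a newform of the smaller level $\mathcal{N}_\ell$ while preserving this congruence.

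Next I would carry out a local analysis at each prime $\mathfrak{p} \in \mathcal{M}_\ell$, i.e.\ each $\mathfrak{p} \mid\mid \mathcal{N}$ with $\ell \mid v_{\mathfrak{p}}(\Delta_{\mathfrak{p}})$. Since $E$ has multiplicative reduction at such $\mathfrak{p}$, the Tate uniformization gives $E(\bar{K}_{\mathfrak{p}}) \cong \bar{K}_{\mathfrak{p}}^\times / q_{\mathfrak{p}}^{\mathbb{Z}}$ with $v_{\mathfrak{p}}(q_{\mathfrak{p}}) = v_{\mathfrak{p}}(\Delta_{\mathfrak{p}})$, and $E[\ell]$ is generated (as a Galois module) by a primitive $\ell$-th root of unity together with any $\ell$-th root of $q_{\mathfrak{p}}$. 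From this presentation one reads off that inertia $I_{\mathfrak{p}}$ acts trivially on $E[\ell]$ if and only if $\ell \mid v_{\mathfrak{p}}(q_{\mathfrak{p}})$. Under the assumption $\mathfrak{p} \in \mathcal{M}_\ell$ this holds, so $\bar{\rho}_{E,\ell}$ is unramified at $\mathfrak{p}$, which is the geometric input needed to strip $\mathfrak{p}$ from the level.

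Finally I would invoke the Mazur-principle generalizations of Fujiwara, Jarvis, and Rajaei to descend $\mathfrak{g}$ prime-by-prime, removing each $\mathfrak{p} \in \mathcal{M}_\ell$ from the level in turn, to arrive at a Hilbert eigenform $\mathfrak{f}$ of parallel weight $2$ new at level $\mathcal{N}_\ell$ with $\bar{\rho}_{\mathfrak{f},\lambda'} \sim \bar{\rho}_{E,\ell}$ for some prime $\lambda' \mid \ell$ of $\q_\mathfrak{f}$. The remaining hypotheses are exactly what these lowering statements require: hypothesis (iii) makes $\bar{\rho}_{E,\ell}$ irreducible, ruling out congruences with Eisenstein series; hypotheses (iv) and (v) ensure the local behavior at primes above $\ell$ is sufficiently tame (semistable, with $\ell$ dividing the valuation of the discriminant) to allow the $\ell$-part of the level to be lowered as well; and hypothesis (i) excludes the small or exceptional residue characteristics where the mod $\ell$ representation theory of $\mathrm{GL}_2(\mathbb{F}_\ell)$ degenerates, or where the obstruction $\q(\zeta_\ell)^+ \subset K$ would break the multiplicity-one statements that underlie the argument.

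The main obstacle is the level-lowering step itself, which is the technical heart of \cite{Fuji}, \cite{Jarvis}, and \cite{Rajaei}. It rests on a delicate study of mod $\ell$ cohomology of (quaternionic) Shimura curves via the Jacquet--Langlands correspondence, together with an Ihara-type lemma, allowing one to transfer congruences of eigenforms between different quaternion algebras while controlling local components at the offending $\mathfrak{p}$. A self-contained treatment would require building this Shimura-curve and quaternion-algebra machinery in the Hilbert modular setting essentially from scratch, which is precisely the substantial content of those three papers; my proof would invoke their conclusions directly rather than reprove them.
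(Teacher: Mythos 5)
The paper offers no proof of this statement at all: it is quoted directly from Freitas--Siksek \cite{FS} as a packaged summary of the level-lowering theorems of Fujiwara \cite{Fuji}, Jarvis \cite{Jarvis} and Rajaei \cite{Rajaei}, so there is no in-paper argument to compare yours against. Your outline --- modularity to get an eigenform of level $\mathcal{N}$, a Tate-curve computation to show the mod-$\ell$ representation has trivial Serre conductor at each $\mathfrak{p}\mid\mathcal{M}_\ell$, then prime-by-prime application of the Mazur-principle generalizations --- is exactly the standard derivation that \cite{FS} itself sketches, and you are right that the substantive content lives in the three cited papers and cannot reasonably be reproved here. One imprecision: your claim that inertia at $\mathfrak{p}$ acts trivially on $E[\ell]$ if and only if $\ell\mid v_{\mathfrak{p}}(q_{\mathfrak{p}})$ is only correct when $\mathfrak{p}\nmid\ell$ (and, strictly, only after the unramified quadratic twist implicit in the Tate parametrization, which is harmless since unramified twists do not change the ramification of $\bar{\rho}_{E,\ell}$); the cyclotomic line $\langle\zeta_\ell\rangle$ is ramified at any $\mathfrak{p}\mid\ell$, so for such primes the correct local conclusion is finite flatness (``peu ramifi\'ee''), not unramifiedness, and this is precisely where the bound $e(\mathfrak{q}/\ell)<\ell-1$ from hypothesis (i) together with (iv) and (v) is consumed. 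You do gesture at this in your final paragraph, but the blanket unramifiedness statement over all of $\mathcal{M}_\ell$ should be split into the two cases. With that caveat the proposal is a faithful reconstruction of the argument the paper is outsourcing.
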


\begin{rem}\label{rem:arises_consequence}
	Following the notation of \Cref{thm:Level-Low}, suppose that $\bar{\rho}_{E,\ell}\sim \bar{\rho}_{\mathfrak{f},\lambda}$. Let $\mathfrak{p}$ be a non-zero prime ideal in $K$. By comparing the traces of the images of Frobenius at $\mathfrak{p}$ in $\bar{\rho}_{F,\ell}$ and $\bar{\rho}_{\mathfrak{f},\lambda}$ we have the following. 
	\begin{enumerate}[(i)]
		\item If $\mathfrak{p}\nmid \ell\mathcal{N}$, then $a_{\mathfrak{p}}(E) \equiv a_{\mathfrak{p}}(\mathfrak{f}) \mod \lambda.$ 
		\item If $\mathfrak{p} \nmid \ell\mathcal{N}_\ell$ and $\mathfrak{p} \>||\> \mathcal{N}$, then $\pm(\Norm_{{K}/\q}(\mathfrak{p})+1)\equiv a_{\mathfrak{p}}(\mathfrak{f}) \mod \lambda.$
	\end{enumerate}
	
\end{rem}

We will be using the following result which shows that, under certain assumptions, $(iii)$ in \Cref{thm:Level-Low} is satisfied whenever $\ell$ is greater than some bound depending only the number field $K$.

\begin{thm}[Freitas \& Siksek \cite{FSirrModp}]\label{thm:RepIrr}
	Let $K$ be a Galois totally real field. There is an effective constant
	$C_K$, depending only on $K$, such that the following holds. If $\ell > C_K$ is prime, and $E$
	is an elliptic curve over $K$ which is semistable at all $\mathfrak{q} \mid \ell$, then $\bar{\rho}_{E,\ell}$ is irreducible.
\end{thm}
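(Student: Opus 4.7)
My plan is to argue by contradiction: suppose $\bar{\rho}_{E,\ell}$ is reducible. Reducibility gives a $G_K$-stable line in $E[\ell]$, and hence a pair of characters $\theta, \theta' : G_K \to \mathbb{F}_\ell^{\times}$ with $\theta \theta' = \chi_\ell$, where $\chi_\ell$ is the mod $\ell$ cyclotomic character. The goal is to show that, under the semistability hypothesis at primes above $\ell$, any such $\theta$ forces $\ell$ to lie below a bound depending only on $K$. The overall strategy, going back to Mazur and adapted by Momose, David, Kraus, and Freitas--Siksek to the totally real setting, is to use class field theory to severely constrain $\theta$, and then compare the resulting identity with Frobenius traces on $E$ to get a divisibility relation for $\ell$.

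The first step is a local analysis. At each prime $\mathfrak{q} \mid \ell$, semistability ensures that the scheme-theoretic closure of $E[\ell]$ over $\mathcal{O}_{K,\mathfrak{q}}$ is a finite flat group scheme that is an extension of an \'etale group by a multiplicative one. Raynaud's classification then forces $\theta|_{I_\mathfrak{q}}$ to be either trivial or equal to $\chi_\ell|_{I_\mathfrak{q}}$, giving a signature $s_\mathfrak{q} \in \{0,1\}$ at every $\mathfrak{q} \mid \ell$. At primes $\mathfrak{q} \nmid \ell$ of bad reduction, Tate's uniformization shows $\theta^{12}$ is unramified at $\mathfrak{q}$. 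The Galois hypothesis on $K$ means that conjugate primes above $\ell$ share the same signature, so I can assemble a single integer $s$ (bounded in terms of $[K:\q]$) from these local exponents and form the twist
\[
\psi := \theta^{12} \cdot \chi_\ell^{-s},
\]
which is then unramified at every finite prime of $K$.

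By class field theory, $\psi$ factors through $\mathrm{Gal}(H/K)$, where $H$ is the Hilbert class field of $K$, so $\psi^{h_K} = 1$ with $h_K = \#\mathrm{Cl}(K)$. Translating back gives the global identity $\theta^{12 h_K} = \chi_\ell^{s h_K}$ on $G_K$. For any prime $\mathfrak{q}_0 \nmid \ell$ at which $E$ has good reduction, evaluation at Frobenius yields
\[
\alpha_{\mathfrak{q}_0}^{12 h_K} \equiv \Norm(\mathfrak{q}_0)^{s h_K} \pmod{\ell},
\]
where $\alpha_{\mathfrak{q}_0} \in \overline{\mathbb{F}}_\ell$ is a root of $T^2 - a_{\mathfrak{q}_0}(E)\, T + \Norm(\mathfrak{q}_0)$. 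Running $\mathfrak{q}_0$ through a finite set of ``test primes'' depending only on $K$, and taking the product over the bounded set of possible values of $s$, the collection of congruences packages into divisibility of $\ell$ by a fixed nonzero integer determined by $K$. This produces the effective constant $C_K$.

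The main obstacle is the local step and, relatedly, the uniformity of the signature across primes above $\ell$: without the Galois hypothesis on $K$ the different inertia subgroups could impose incompatible conditions on $\theta$, and no single $\psi$ would be everywhere unramified. The second technical point is verifying that the integer to which $\ell$ is forced to divide is honestly nonzero; if the test product vanishes identically one must rule out a latent CM obstruction by enlarging the set of auxiliary primes $\mathfrak{q}_0$ until the Hasse bound $|a_{\mathfrak{q}_0}(E)| \le 2\sqrt{\Norm(\mathfrak{q}_0)}$ prevents the congruences from being satisfied trivially. Both points are settled in the work of Freitas and Siksek by a careful choice of test primes tailored to the arithmetic of $K$.
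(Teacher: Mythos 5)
First, note that the paper itself offers no proof of this statement: it is quoted as a theorem of Freitas--Siksek \cite{FSirrModp} and used as a black box, so the only meaningful comparison is with the proof in that reference. Your sketch has the right overall shape --- reducibility yields an isogeny character $\theta$ with $\theta\theta'=\chi_\ell$, whose ramification is controlled locally and which is then constrained by class field theory --- but it breaks down at the central step. You claim that because $K/\q$ is Galois, the local signatures $s_{\mathfrak{q}}\in\{0,1\}$ at the primes $\mathfrak{q}\mid\ell$ all agree, so that a single global twist $\psi=\theta^{12}\chi_\ell^{-s}$ is unramified at every finite place. That is not true: $s_{\mathfrak{q}}$ records whether the $G_K$-stable line meets the multiplicative or the \'etale part of the local integral model of $E[\ell]$ at $\mathfrak{q}$, which is a property of $E$ at that particular prime; conjugating $\mathfrak{q}$ by an element of $\mathrm{Gal}(K/\q)$ conjugates the character rather than preserving its restriction to inertia, and mixed signatures genuinely occur (e.g.\ $\ell$ split in a real quadratic field, with the isogeny kernel reducing into the connected component at one prime above $\ell$ and into the \'etale quotient at the other). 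Without a uniform $s$ there is no everywhere-unramified $\psi$, and the Hilbert-class-field step, the identity $\theta^{12h_K}=\chi_\ell^{sh_K}$, and everything after it collapse.

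The actual proof in \cite{FSirrModp} splits into cases according to the signature vector. When all $s_{\mathfrak{q}}$ equal $0$ (or, after swapping $\theta$ and $\theta'$, all equal $1$), one does get $\theta$ of order dividing $12h_K$; but the conclusion is drawn not from Frobenius congruences at auxiliary primes --- a test prime of good reduction cannot be chosen independently of $E$, and the nonvanishing of the resulting resultants is exactly the issue you flag but do not resolve --- but from the observation that $E$ then acquires a point of order $\ell$ over an extension of $K$ of degree at most $12h_K$, so Merel--Oesterl\'e uniform boundedness bounds $\ell$ effectively in terms of $[K:\q]$ and $h_K$. In the mixed-signature case, which your sketch does not address, the ray-class character attached to $\theta^{12}$ is evaluated on global units, giving for each $u\in\mathcal{O}_K^{\times}$ a relation of the shape $\ell\mid\Norm_{K/\q}\bigl(\prod_{\sigma}\sigma(u)^{12t_{\sigma}}-1\bigr)$ with exponents $t_{\sigma}\in\{0,1\}$ not all equal; the Galois hypothesis, combined with Dirichlet's unit theorem, is what guarantees a unit $u$ for which the bracketed quantity is not a root of unity, hence a nonzero integer depending only on $K$ that $\ell$ must divide. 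So the Galois assumption is used to make the unit argument non-degenerate, not to equalize the signatures; to repair your proposal you would need to add the mixed case and replace the auxiliary-prime endgame with the torsion-point or unit argument.
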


\vspace{5 pt}

\section{Proof of \Cref{thm: bPefectPowers}}\label{sec:proof}

To prove \Cref{thm: bPefectPowers}, the crucial step is to bound the exponent of any perfect power. To do that, we associate a specific Frey curve for any point $mP$ with $B_m$ a large enough $\ell$th power, and by using modularity and level lowering, we obtain a bound that only depends on the point $P$ and the positive integer $b$. In fact, the proof provides an effective method to compute this bound explicitly.

We start by proving a lemma that will allow us to associate an appropriate Frey curve to an equation obtained from substituting $mP$ in the equation of $E$. 

\begin{lem}\label{lem:FreyCurveConstruction}
	Let $\ell$ be a prime, and let $a,d,u,v,$ and $w$ be non-zero integers.
	Suppose that $a,d$ are positive, $a$ is square-free, $\gcd(u,v)\mid ad$, and that
	\begin{equation}\label{eqn:sol-to-Frey}
		v^2-au^4=dw^{4\ell}.
	\end{equation}
	Then we can associate to \eqref{eqn:sol-to-Frey} the Frey curve  \begin{equation}F:\>\> Y^2=X(X^2+4u\sqrt{a}X+2\sqrt{a}(v+u^2\sqrt{a})),
	\end{equation}
    over the number field $K=\q(\sqrt{a})$, where $\Delta_F\neq 0$. Moreover, for all non-zero prime ideals $\mathfrak{p}$ of $\mathcal{O}_K$ such that $\mathfrak{p}\nmid 2ad$, the curve $F$ is minimal at $\mathfrak{p}$, has semistable reduction at $\mathfrak{p}$, and $\ell\mid v_{\mathfrak{p}}(\Delta_{F})$.
\end{lem}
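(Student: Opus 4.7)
The plan is to compute the standard Weierstrass invariants of $F$ directly, exploit the identity forced by \eqref{eqn:sol-to-Frey}, and then run a case analysis on the prime $\mathfrak{p}$. Setting $\alpha := au^2 + v\sqrt{a}$ and $\beta := au^2 - v\sqrt{a}$, the equation of $F$ has the shape $Y^2 = X^3 + 4u\sqrt{a}\,X^2 + 2\alpha X$. For a Weierstrass equation $Y^2 = X(X^2 + AX + B)$ one has $\Delta = 16B^2(A^2 - 4B)$ and $c_4 = 16A^2 - 48B$, and with $A = 4u\sqrt{a}$, $B = 2\alpha$ a short computation gives
\[
\Delta_F = 512\,\alpha^2\beta, \qquad c_4 = 32(5au^2 - 3v\sqrt{a}) = 32(\alpha + 4\beta).
\]
The key identity supplied by \eqref{eqn:sol-to-Frey} is $\alpha\beta = a^2u^4 - av^2 = -ad\,w^{4\ell}$, which also rewrites $\Delta_F = -512\,ad\,\alpha\,w^{4\ell}$. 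Nonvanishing of $\Delta_F$ is immediate: $\alpha = 0$ would force $v^2 = au^4$ and hence $dw^{4\ell} = 0$, contradicting $d,w\neq 0$.

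Next I fix a prime $\mathfrak{p}$ of $\mathcal{O}_K$ with $\mathfrak{p}\nmid 2ad$ and let $p$ be the rational prime below it. The central claim I will establish is that $\alpha$ and $\beta$ cannot both lie in $\mathfrak{p}$. Indeed, if both did then $\alpha+\beta = 2au^2$ and $\alpha-\beta = 2v\sqrt{a}$ would lie in $\mathfrak{p}$; since $\mathfrak{p}\nmid 2a$ and $\sqrt{a}\notin\mathfrak{p}$ (otherwise $a = (\sqrt{a})^2\in\mathfrak{p}$), this forces $u,v\in\mathfrak{p}\cap\mathbb{Z}=(p)$. Then $p\mid\gcd(u,v)$, and by hypothesis $\gcd(u,v)\mid ad$, so $p\mid ad$, contradicting $\mathfrak{p}\nmid ad$. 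Consequently at most one of $v_\mathfrak{p}(\alpha),v_\mathfrak{p}(\beta)$ is positive, and the relation $\alpha\beta = -ad\,w^{4\ell}$ together with $v_\mathfrak{p}(ad)=0$ pins the nonzero valuation (when it exists) to $4\ell\, v_\mathfrak{p}(w)$.

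Three cases remain. If $v_\mathfrak{p}(\alpha)=v_\mathfrak{p}(\beta)=0$, then $v_\mathfrak{p}(\Delta_F)=0$, so $F$ has good reduction at $\mathfrak{p}$ by \Cref{prop:reduction-criteria}; the equation is minimal by \Cref{rem:minimalcriteria} since $v_\mathfrak{p}(\Delta_F)<12$, and $\ell\mid 0$ is trivial. If exactly one of $\alpha,\beta$ lies in $\mathfrak{p}$, then in $c_4=32(\alpha+4\beta)$ one summand has valuation $0$ and the other is strictly positive, so $v_\mathfrak{p}(c_4)=0$; by \Cref{prop:reduction-criteria} the curve has multiplicative reduction at $\mathfrak{p}$, by \Cref{rem:minimalcriteria} the equation is minimal, and $v_\mathfrak{p}(\Delta_F)=2v_\mathfrak{p}(\alpha)+v_\mathfrak{p}(\beta)\in\{8\ell\,v_\mathfrak{p}(w),\,4\ell\,v_\mathfrak{p}(w)\}$ is divisible by $\ell$.

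The only real subtlety is the dichotomy $\min(v_\mathfrak{p}(\alpha),v_\mathfrak{p}(\beta))=0$, and this is exactly where the coprimality hypothesis $\gcd(u,v)\mid ad$, together with $\mathfrak{p}\nmid 2$, is used; everything else is formal manipulation of the Weierstrass data using the identity $\alpha\beta=-ad\,w^{4\ell}$.
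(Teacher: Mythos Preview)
Your proof is correct and follows essentially the same route as the paper's: factor $v^2-au^4$ over $K$, compute $\Delta_F$ and $c_4$, show the two factors are coprime away from $2ad$ via the sum/difference trick and $\gcd(u,v)\mid ad$, and read off minimality, semistability, and $\ell\mid v_\mathfrak{p}(\Delta_F)$ from \Cref{rem:minimalcriteria} and \Cref{prop:reduction-criteria}. Your repackaging with $\alpha,\beta$ (which are $\pm\sqrt{a}$ times the paper's factors $v\pm u^2\sqrt{a}$) and the identity $c_4=32(\alpha+4\beta)$ make the ultrametric step for $v_\mathfrak{p}(c_4)=0$ a bit cleaner than the paper's linear-combination argument, but the substance is identical; the one cosmetic omission is that $\Delta_F\neq 0$ also requires $\beta\neq 0$, which follows immediately from $\alpha\beta=-ad\,w^{4\ell}\neq 0$.
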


\begin{proof}
	Factoring over $\mathcal{O}_K$, we have 
	\begin{equation}\label{eqn:factor} 
		(v+ u^2\sqrt{a})(v- u^2\sqrt{a})=dw^{4\ell}. \end{equation}
	If $\mathfrak{q}$ is a non-zero a prime ideal of $\mathcal{O}_K$ that divides both $(v+ u^2\sqrt{a})$ and $(v- u^2\sqrt{a})$, then $\mathfrak{q}\mid 2ad$, since $\gcd(u,v)\mid ad$.
	Thus, by the unique factorization of ideals into products of prime ideals, we obtain
	\begin{equation}\label{eqn:primeidealLpower} (v+u^2\sqrt{a})\mathcal{O}_K=\mathfrak{b}_1\mathfrak{a}_1^\ell \>\>\> \text{ and } \>\>\> (v-u^2\sqrt{a})\mathcal{O}_K=\mathfrak{b}_2\mathfrak{a}_2^\ell,
	\end{equation}
	where $\mathfrak{a}_1, \mathfrak{a}_2,\mathfrak{b}_1, \mathfrak{b}_2$ are ideals in $\mathcal{O}_K$, and $\mathfrak{b}_1 \mathfrak{b}_2\mid 2ad\mathcal{O}_K$. Now, notice that 
	\begin{equation}\label{ll2quadcase}(v+u^2\sqrt{a})-(v-u^2\sqrt{a})= 2u^2\sqrt{a}.\end{equation}
	We can associate the following Frey curve to \eqref{ll2quadcase}:
	$$F:\>\> Y^2=X(X^2+4u\sqrt{a}X+2\sqrt{a}(v+u^2\sqrt{a})),$$
	where \begin{equation}\label{eqn:disc} \Delta_F=-2^9a\sqrt{a}(v+u^2\sqrt{a})^2(v-u^2\sqrt{a})\end{equation} and $$c_4=32\sqrt{a}(3v-5u^2\sqrt{a}).$$
	Suppose that $a\neq 1$. Since $a$ is square free, $\sqrt{a}$ is irrational; hence, $\Delta_F\neq 0$. Now if $a=1$ and $\Delta_F=0$, then $dw^{4\ell}=(v+u^2\sqrt{a})(v-u^2\sqrt{a})=0$, contradicting that both $d$ and $w$ are non-zero. Therefore,  $\Delta_F$ is non-zero.
	
	Let $\mathcal{S}$ be the set of non-zero prime ideals in $\mathcal{O}_K$ dividing $2ad$. Now, if $\mathfrak{p}$ is a prime ideal of $\mathcal{O}_K$ that divides both $\Delta_F$ and $c_4$, then $\mathfrak{p}$ will either divide  $(v+u^2\sqrt{a})$ and $(3v-5u^2\sqrt{a})$, or  $(v-u^2\sqrt{a})$ and $(3v-5u^2\sqrt{a})$. If $\mathfrak{p}$ divides $(v+u^2\sqrt{a})$ and $(3v-5u^2\sqrt{a})$, then it will divide $$3(v+u^2\sqrt{a})-(3v-5u^2\sqrt{a})=8u^2\sqrt{a}$$ and $$5(v+u^2\sqrt{a})+(3v-5u^2\sqrt{a})=8v$$ implying that $\mathfrak{p}$ divides both $2u\sqrt{a}$ and $2v$. Since $\gcd (u,v) \mid ad$, we have that $\mathfrak{p}$ divides $(2d\sqrt{a})\mathcal{O}_K$. A similar argument with the other pair also gives that $\mathfrak{p} \mid (2d\sqrt{a})\mathcal{O}_K$. Thus, if $\mathfrak{p}$ divides $\Delta_F$ and $c_4$, then $\mathfrak{p}\in \mathcal{S}$.
	
	Therefore, by \Cref{rem:minimalcriteria}, the curve $F$ is minimal at all primes  $\mathfrak{p}\notin \mathcal{S}$. Moreover, by \Cref{prop:reduction-criteria}, the curve $F$ has good or multiplicative reduction at all primes $\mathfrak{p}\notin \mathcal{S}$. Also, if  $\mathfrak{p}\notin\mathcal{S}$ and $\mathfrak{p}\mid \Delta_F$, then  \eqref{eqn:primeidealLpower} and \eqref{eqn:disc} imply that $\mathfrak{p}$ divides $\mathfrak{a}_1^\ell$ or $\mathfrak{a}_2^\ell$. Hence,  $\ell\mid v_{\mathfrak{p}}(\Delta_{F})$ for all $\mathfrak{p}\notin \mathcal{S}$.	
\end{proof}

The next lemma shows that if $\ell$ in \Cref{lem:FreyCurveConstruction} is large enough, then we can effectively apply level lowering to the Frey curve $F$. We note that the $C_K$ appearing below is the effective constant, depending only on $K$, given by \Cref{thm:RepIrr} for the Galois totally real field $K$ under consideration. Note that this makes sense as $K$ in \Cref{lem:FreyCurveConstruction} is either $\q$ or a real quadratic number field, and in both cases $K$ is a Galois totally real field.

\begin{lem}\label{lem:levellowering}
	Take the assumptions of \Cref{lem:FreyCurveConstruction}. Let $\mathcal{N}$ be the conductor of $F$. If $\ell>max\{2ad,C_K,5\}$, then there is a Hilbert eigenform $\mathfrak{f}$ of parallel weight $2$ that is new at level $\mathcal{N}_\ell$ and a prime ideal $\lambda$ of $\q_\mathfrak{f}$ such that $\lambda\mid \ell$ and $\bar{\rho}_{F,\ell}\sim \bar{\rho}_{\mathfrak{f},\lambda}$.
\end{lem}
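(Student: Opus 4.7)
The plan is to verify the five hypotheses (i)--(v) of Theorem \ref{thm:Level-Low} for the Frey curve $F$ from Lemma \ref{lem:FreyCurveConstruction}, from which the stated conclusion is immediate. Since $a$ is a positive square-free integer, $K=\q(\sqrt{a})$ is either $\q$ (when $a=1$) or a real quadratic field; in either case $K$ is a Galois totally real field of degree at most $2$, so Theorem \ref{thm:RepIrr} is applicable and produces the constant $C_K$ appearing in the hypothesis.

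The simplest conditions are (ii), (iv), and (v). Modularity of $F$ (hypothesis (ii)) is given by Theorem \ref{thm:RQmodulra} when $K$ is real quadratic, and by the classical modularity theorem of Wiles, Breuil, Conrad, Diamond, and Taylor when $K=\q$. For (iv) and (v), I would use the assumption $\ell>2ad$: it forces every prime $\mathfrak{q}$ of $\mathcal{O}_K$ lying above $\ell$ to satisfy $\mathfrak{q}\nmid 2ad$, whence Lemma \ref{lem:FreyCurveConstruction} directly delivers the semistability of $F$ at $\mathfrak{q}$, the minimality of the given model there, and the divisibility $\ell\mid v_{\mathfrak{q}}(\Delta_F)$.

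Condition (iii), irreducibility of $\bar{\rho}_{F,\ell}$, then follows from Theorem \ref{thm:RepIrr}: the hypothesis $\ell>C_K$ is built in, and the semistability of $F$ at all primes above $\ell$ was just verified. For (i), since $\ell$ is prime with $\ell>5$ we have $\ell\geq 7$; the ramification condition follows from $e(\mathfrak{q}/\ell)\leq[K:\q]\leq 2<\ell-1$, and $[\q(\zeta_\ell)^+:\q]=(\ell-1)/2\geq 3$ exceeds $[K:\q]$, so $\q(\zeta_\ell)^+\not\subset K$.

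The proof is essentially bookkeeping: each of the three terms in $\max\{2ad,C_K,5\}$ is tailored to clear exactly one cluster of hypotheses of the level-lowering theorem (controlling reduction and ramification at $\ell$, securing irreducibility of the mod-$\ell$ representation, and ensuring the elementary numerical constraints of (i), respectively). There is no genuine obstacle — the lemma is a clean packaging of Lemma \ref{lem:FreyCurveConstruction} together with Theorems \ref{thm:Level-Low}, \ref{thm:RQmodulra}, and \ref{thm:RepIrr} — but care is needed to check each sub-condition of (i) explicitly, as this is where the bound $\ell>5$ and the bound $[K:\q]\leq 2$ are both simultaneously used.
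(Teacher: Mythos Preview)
Your proposal is correct and follows essentially the same approach as the paper: both arguments verify hypotheses (i)--(v) of Theorem~\ref{thm:Level-Low} using Lemma~\ref{lem:FreyCurveConstruction} for (iv) and (v) via $\ell>2ad$, Theorem~\ref{thm:RepIrr} for (iii) via $\ell>C_K$, modularity over $\q$ or real quadratic fields for (ii), and the degree bound $[K:\q]\leq 2$ together with $\ell>5$ for (i). If anything, your justification of the sub-conditions in (i) is slightly more explicit than the paper's.
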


\begin{proof}
	Recall the definitions  $$\mathcal{M}_\ell:=\prod_{\mathfrak{p}||\mathcal{N}, \> \ell|v_{\mathfrak{p}} (\Delta_{\mathfrak{p}})} \mathfrak{p}, \>\>\>\> \mathcal{N}_\ell:=\frac{\mathcal{N}}{\mathcal{M}_\ell},$$ in \Cref{thm:Level-Low}. Again, let $\mathcal{S}$ be the set of prime ideals in $\mathcal{O}_K$ dividing $2ad$. By \Cref{lem:FreyCurveConstruction}, if $\mathfrak{p}\mid \mathcal{N}$ and $\mathfrak{p}\notin\mathcal{S}$, then $\mathfrak{p}\>||\>\mathcal{N}$ and $\ell\mid v_{\mathfrak{p}}(\Delta_{F})$. Then $\mathcal{N}_\ell$ is a product of finitely many primes ideals that are contained in $\mathcal{S}$. 
	
	Since $\ell>2ad$, $\ell$ is not divisible by any prime ideal in $\mathcal{S}$. Thus, for all $\mathfrak{p}\mid \ell$, $F$ has semistable reduction at $\mathfrak{p}$ and $\ell\mid  v_{\mathfrak{p}} (\Delta_{F})$. Moreover, since $\ell> C_K$,   \Cref{thm:RepIrr} implies that  $\bar{\rho}_{F,\ell}$ is irreducible. Therefore, the following conditions are satisfied: 
	\begin{enumerate}[(i)]
		\item $\ell\geq 5$, the ramification index for all primes of $\mathcal{O}_K$ lying above $\ell$ is $\leq$ 2, and $\q(\zeta_\ell)^+\not\subset {K}$;
		\item $F$ is modular, by \Cref{thm:RQmodulra} if $a\neq 1$, and by the Modularity Theorem if $a= 1$;
		\item $\bar{\rho}_{F,\ell}$ is irreducible;
		\item $E$ is semistable at $\mathfrak{p}$ for all $\mathfrak{p} \mid \ell$;
		\item and $\ell\mid  v_{\mathfrak{p}} (\Delta_{F})$ for all $\mathfrak{p} \mid \ell$.
	\end{enumerate}
	Hence, we may apply \Cref{thm:Level-Low}.
\end{proof}

\begin{myproof}
	Write $mP=\left( A_m/B_m^2, C_m/B_m^3 \right)$ in lowest terms, for all $m\geq 1$. If $A_m B_m=0$, then from the equation of the elliptic curve we deduce that $P=(0,0)$, which is a $2$-torsion point; but $P$ is non-torsion. Thus, $A_m\neq0$ and $C_m\neq0$ for all $m\geq 1$. Suppose that $B_m=w_m^\ell$, for a prime number $\ell$ and an integer $w_m$. Since $B_1>1$ and $B_1\mid B_m$ (by \Cref{prop:bmp1}), we have $w_m>1$ for all $m$. For a fixed $\ell$, by Theorem 1.1 in \cite{Everest4}, we can only have finitely many elements of the sequence that are $\ell$th powers. Thus, to prove the theorem it suffices to show that $\ell$ is bounded.
	
	By substituting $mP$ in the equation of the elliptic curve, we get an equation of the form
	\begin{equation*} C_m^2=A_m(A_m^2+bw_m^{4\ell}), \end{equation*} 
	where $\gcd(A_m,A_m^2+bw_m^{4\ell})\>|\> b$. Therefore, we can write 
	\begin{equation*} A_m=a_m u_m^2 \>\>\> \text{ and } \>\>\> A_m^2+bw_m^{4\ell}=a_m v_m^2,\end{equation*}
	where $u_m, v_m,$ and $a_m$ are positive integers, $a_m$ is square-free, $a_m \mid b$, and $\gcd(u_m,v_m)$ divides $b$. Observe that $u_m v_m\neq0$, since $C_m\neq 0$. Hence, we get the equation
	\begin{equation}\label{fixedleqn} v_m^2-a_m u_m^4=(b/a_m)w_m^{4\ell}. \end{equation}
	
	Observe that \eqref{fixedleqn} is \eqref{eqn:sol-to-Frey} in \Cref{lem:FreyCurveConstruction} with $u=u_m,\> v=v_m, \> w=w_m,$ $a=a_m,\> d=b/a_m,$ and $2ad=2b$. Then we can associate to \eqref{fixedleqn} the Frey curve  \begin{equation*}F_m:\>\> Y^2=X(X^2+4u_m\sqrt{a_m}X+2\sqrt{a_m}(v_m+u_m^2\sqrt{a_m})),
	\end{equation*}
	over the number field $K_m=\q(\sqrt{a_m})$, where $\Delta_{F_m}\neq 0$. Moreover, for all primes $\mathfrak{p}$ of $\mathcal{O}_{K_m}$ such that $\mathfrak{p}\nmid 2b$,  $F_m$ is minimal at $\mathfrak{p}$, $F_m$ has semistable reduction at $\mathfrak{p}$, and $\ell\mid v_{\mathfrak{p}}(\Delta_{F_m})$.

	Let $\mathcal{S}$ be the set of prime ideals in $\mathcal{O}_{K_m}$ dividing $2b$. Let $T$ be the set of prime numbers lying under the prime ideals of $\mathcal{S}$. Then, by \Cref{lem:primdivisorconstruct}, there exists a positive integer $k$, depending only on $B_1$ and $b$, and a prime $p_0\notin T$ such that if $B_m$ is an $\ell$th power for some $\ell>k$, then $p_0\mid B_m$. 
	Now, for all $m\in \mathbb{N}$, $a_m\mid b$. Hence, for all $B_m$, there are only finitely many possible number fields $K_m$, which only depend on the constant $b$ appearing in the equation of $E$. Again, for any $m\in \mathbb{N}$, let $C_m$ denote the corresponding effective bound given by \Cref{thm:RepIrr}. Set $C$ to be the maximum of all the possible values of $C_m$.
	
	Suppose now that $B_m=w_m^\ell$ where $\ell>\text{max}\{k,2b, C,p_0,5\}$. Recall that $k,2b, C,$ and $p_0$ depend only on $b$ and the point $P$ generating $\{B_m\}$.  Now, $\ell>k$ implies that $p_0\mid w_m^\ell$, which in turn implies that $p_0\mid \Delta_{F_m}$. If $\mathfrak{p}\mid p_0$, where $\mathfrak{p}$ is a prime ideal in $\mathcal{O}_{K_m}$, then $\mathfrak{p}\notin\mathcal{S}$ and $\mathfrak{p} \nmid \ell$, since $p_0\notin T$ and $\ell>p_0$. Let $\mathfrak{p}$ be any prime ideal of $\mathcal{O}_{K_m}$ that divides $p_0$. So, we have that $\mathfrak{p}\> ||\> \mathcal{N}$, the conductor of $F_m$, and $\mathfrak{p}\nmid \ell\mathcal{N}_\ell$. Since $\ell>\text{max}\{2b, C,5\}$, by \Cref{lem:levellowering}, there is a Hilbert eigenform $\mathfrak{f}$ over ${K_m}$ of parallel weight $2$ that is new at level $\mathcal{N}_\ell$ and some prime $\lambda$ of $\q_\mathfrak{f}$ such that $\lambda\mid \ell$ and $\bar{\rho}_{F,\ell}\sim \bar{\rho}_{\mathfrak{f},\lambda}$. Since $\mathfrak{p} \nmid \ell\mathcal{N}_\ell$ and $\mathfrak{p} \>||\> \mathcal{N}$, by \Cref{rem:arises_consequence}, we have that 
	\begin{equation*}
		\pm(\Norm_{{K_m}/\q}(\mathfrak{p})+1)\equiv a_{\mathfrak{p}}(\mathfrak{f}) \mod \lambda.
	\end{equation*}
	This implies that 
	\begin{equation}\label{eqn:lnormdivide}
		\ell \mid \Norm_{\q_{\mathfrak{f}}/\q}(\Norm_{{K_m}/\q}(\mathfrak{p})+1 \pm a_{\mathfrak{p}}(\mathfrak{f})),
	\end{equation}
	Now, Theorem 0.1 in \cite{Livne} states that the Ramanujan-Petersson conjecture holds for a certain class of Hilbert modular forms. It is clear that the $\mathfrak{f}$ and $\mathfrak{p}$ we are considering satisfy the conditions of Theorem 0.1 in \cite{Livne}. Therefore, we have that
	$$ |a_{\mathfrak{p}}(\mathfrak{f})| \leq 2\sqrt{\Norm_{{K_m}/\q}(\mathfrak{p})} .$$ 
	Thus, \begin{align*}
		|\Norm_{{K_m}/\q}(\mathfrak{p})+1 \mp a_{\mathfrak{p}}(\mathfrak{f})|&
		\geq \Norm_{{K_m}/\q}(\mathfrak{p})+1-2\sqrt{\Norm_{{K_m}/\q}(\mathfrak{p})}\\
		&=\left(\sqrt{\Norm_{{K_m}/\q}(\mathfrak{p})}-1\right)^2>0.
	\end{align*}
	Hence, $\Norm_{\q_{\mathfrak{f}}/\q}(\Norm_{{K_m}/\q}(\mathfrak{p})+1 \mp a_{\mathfrak{p}}(\mathfrak{f}))\neq 0$. Therefore, by \eqref{eqn:lnormdivide}, $\ell$ is bounded by $\Norm_{\q_{\mathfrak{f}}/\q}(\Norm_{{K_m}/\q}(\mathfrak{p})+1 \mp a_{\mathfrak{p}}(\mathfrak{f}))$. Note that $\Norm_{{K_m}/\q}(\mathfrak{p})$ is either $p_0$ or $p_0^2$, and $p_0$ only depends on $B_1$ and $b$. Moreover, $\mathcal{N}_\ell$ has finitely many possible values that only depend on $2b$; more precisely, $\mathcal{N}_\ell$ is a finite product of prime ideals belonging to $\mathcal{S}$, each of which has a bounded exponent by \Cref{rem:boundNpowers23}. Thus, by noting that there are finitely many Hilbert eigenforms $\mathfrak{f}$ of any fixed level, we have that the set of Hilbert eigenforms $\mathfrak{f}$ with level equal to any $\mathcal{N}_{\ell}$ is finite. Therefore, for any $B_m$ an $\ell$th power with $\ell>\text{max}\{k,2b, C,p_0,5\}$,  $\Norm_{\q_{\mathfrak{f}}/\q}(\Norm_{{K_m}/\q}(\mathfrak{p})+1 \mp a_{\mathfrak{p}}(\mathfrak{f}))$ has finitely many possible values. By finding the maximum of these values, we obtain a bound on $\ell$.
\end{myproof}

\begin{rem}
	The condition that the point $P$ is non-integral was vital in our proof, since it allowed us to find the prime $p_0$ using \Cref{lem:primdivisorconstruct}. Also, if $b$ was negative, then to follow the same methodology we would require the modularity of elliptic curves over imaginary quadratic number fields, which is not known to be true yet. One might also be tempted to show that \Cref{thm: bPefectPowers} holds for the more general equation $y^2=x(x^2+ax+b)$, with some additional assumptions, by attempting to adjust this proof. However, this does not appear to be viable as we won't be able to control the common divisors of the two factors on the left hand side of \eqref{eqn:factor}, which was necessary for the effectiveness of the level-lowering method, i.e., obtaining a finite number of possible levels $\mathcal{N}_{\ell}$ when varying $\ell$.
\end{rem}

\end{document}